\documentclass[11pt]{article}
\usepackage[a4paper,margin=25mm]{geometry}
\usepackage[T1]{fontenc}
\usepackage{lmodern}
\usepackage{amsmath,amssymb,amsthm,mathtools}
\usepackage{enumitem}
\usepackage[hidelinks]{hyperref}
\usepackage{microtype}

\newtheorem{theorem}{Theorem}[section]

\theoremstyle{definition}
\newtheorem{remark}[theorem]{Remark}

\newcommand{\pres}[2]{\left\langle #1\ \middle|\ #2\right\rangle}
\newcommand{\normal}{\mathrel{\unlhd}}

\title{\textbf{Two Results on Asphericity}}
\author{Roman Mikhailov}
\date{}

\begin{document}
\maketitle

\begin{abstract}
We construct an explicit finite chain of non-aspherical group presentation
complexes
\(K(\mathcal X)\subset K(\mathcal Y)\subset K(\mathcal Z)\) in which
\(K(\mathcal Y)\) is Cockcroft, both inclusion-induced maps on
\(\pi_2\) are zero, and the pair
\((K(\mathcal Y),K(\mathcal X))\) has the identity property.
This answers a question of Hitchman.  The construction is given
directly from a short balanced presentation of the binary icosahedral
group.  As the second result, we construct
an integral Lie ring presentation in
which a subpresentation has a nontrivial identity among
relations, whereas the presentation itself has none. 
\end{abstract}

\section*{Introduction}

For a connected two-dimensional CW-complex \(K\), asphericity is
equivalent to \(\pi_2(K)=0\).  Whitehead's asphericity question asks
whether every subcomplex of an aspherical two-complex is aspherical.
This question remains open. For the results and approaches related to this question see \cite{HogAngeloni1993}, \cite{Ros07}.

A natural approach to finding a non-aspherical subcomplex of an
aspherical one (see \cite{Dyer} and \cite{Hitchman}) is to construct
an infinite chain of two-dimensional complexes 
\[
 X_1\subset X_2\subset\cdots\subset X_i\subset X_{i+1}\subset\cdots
 \tag{0.1}
\]
such that
\[
 \pi_2(X_1)\ne0,\qquad
 \bigl(\pi_2(X_i)\longrightarrow\pi_2(X_{i+1})\bigr)=0
 \quad(i\ge1).
 \tag{0.2}
\]
If \(X_\infty=\bigcup_iX_i\), every map \(S^2\to X_\infty\) has image
in some \(X_i\) and becomes null-homotopic in \(X_{i+1}\).  Thus
\(\pi_2(X_\infty)=0\), while \(X_1\) is a non-aspherical subcomplex.
The union would be a counterexample to Whitehead's question.

Recall that a connected two-complex \(K\) is Cockcroft when its
ordinary Hurewicz homomorphism
\(\pi_2(K)\to H_2(K)\) is zero \cite{Cockcroft}.  This is exactly the
condition needed
for one step: if \(K\) is Cockcroft, attach two-cells along a
finite generating set of \(\pi_1K\).  The resulting complex \(L\) is
simply connected, and naturality of Hurewicz  map gives
\[
 \bigl(\pi_2(K)\longrightarrow\pi_2(L)\bigr)=0.
\]
Conversely, a zero map
\(\pi_2(K)\to\pi_2(L)\) forces \(K\) to be Cockcroft because
\(H_2(K)\to H_2(L)\) is injective for an inclusion of
two-complexes: the relative cellular chain group in dimension \(3\)
is zero.
This is the ordinary-cover case of \cite[Lemma~1]{BogleyDyer}.

The first nontrivial case is therefore a chain
\[
 K_1\subset K_2\subset K_3,\qquad
 \pi_2(K_1)\ne0,\qquad
 \bigl(\pi_2(K_1)\to\pi_2(K_2)\bigr)=0,\qquad
 \bigl(\pi_2(K_2)\to\pi_2(K_3)\bigr)=0.
 \tag{0.3}
\]
The middle complex \(K_2\) must be Cockcroft.  The construction
above gives no reason for this.  We give a
single explicit eight-generator construction, based on \(SL(2,5)\),
for which \(K_1\) and \(K_2\) are non-aspherical, \(K_2\) is
Cockcroft, and both maps in \textup{(0.3)} are zero; see
Theorem~\ref{thm:icosahedral-chain}.  The last complex is not
Cockcroft, so this construction does not give a third zero map.

This result leads to the following natural problem: how long can a
chain of inclusions of two-complexes be if the first complex is
non-aspherical and every inclusion induces the zero map on \(\pi_2\)?
In particular, at present the author does not know how to construct
\[
 X_1\subset X_2\subset X_3\subset X_4
\]
with \(\pi_2(X_1)\ne0\) and all three inclusion-induced maps on
\(\pi_2\) equal to zero.

In \cite{Dyer}, Dyer replaced the homotopical property by a homological one and suggested a strategy for finding an infinite chain \textup{(0.1)}. The additional condition used in Dyer's program is the identity
property.  We recall its precise form from \cite{Hitchman}. Consider group presentations and their standard two-complexes
\[
 \mathcal P=\langle\mathbf x\mid\mathbf r\rangle,\qquad
 \mathcal Q=\langle\mathbf x\mid\mathbf r,\mathbf s\rangle,
 \qquad K(\mathcal P)=X\subset Y=K(\mathcal Q),
 \tag{0.4}
\]
and put \(F=F(\mathbf x)\) and \(H=\pi_1(Y)\).  For \(N\le H\), let
\(N_F\) be its inverse image in \(F\).  In a spherical picture over
\(\mathcal Q\), choose transverse paths from the global basepoint to
the \(\mathbf s\)-disks.  If such a disk has label
\(s_i^{\varepsilon_i}\), where \(\varepsilon_i=\pm1\), let
\(\omega_i\in F\) be the word determined by its transverse path.
The pair \((Y,X)\) has the right \(N\)-identity property if, in every
spherical picture, the \(\mathbf s\)-disks can be paired
\(i\leftrightarrow j\) so that
\[
 s_i=s_j,\qquad \varepsilon_i=-\varepsilon_j,\qquad
 N_F\omega_i=N_F\omega_j.
 \tag{0.5}
\]
In the left version, the last equality in (0.5) is replaced by
\(\omega_iN_F=\omega_jN_F\).  When \(N\) is normal the two versions
agree; the case \(N=\{1\}\) is called the identity property
\cite[Definition~3.1]{Hitchman}.

There is an equivalent cellular formulation.  Write
\[
 C_2(\widetilde Y)
   =\mathbb Z[H]\,\mathbf r\oplus\mathbb Z[H]\,\mathbf s
\]
and let
\[
 j:\pi_2(Y)\longrightarrow\mathbb Z[H]\,\mathbf s
 \tag{0.6}
\]
be the projection of a spherical cellular cycle onto the coordinates
of the new two-cells.  If \(I_N\) is the augmentation ideal of
\(\mathbb Z[N]\), Proposition~3.3 of \cite{Hitchman} says that the right
\(N\)-identity property is equivalent to all coefficients of
\(j(\pi_2Y)\) lying in \(I_N\mathbb Z[H]\).  Hence
\[
 (Y,X)\text{ has the identity property}
 \quad\Longleftrightarrow\quad j=0,
 \tag{0.7}
\]
because \(I_{\{1\}}=0\).

Hitchman's Example~4.5 gives a non-aspherical pair \(X\subset Y\)
with the identity property and with \(\pi_2(X)\to\pi_2(Y)\) zero, but
\(Y\) is not Cockcroft and therefore cannot be the middle term of a
second zero step.  Immediately after that example Hitchman writes:
\begin{quote}
``It would be of considerable interest to find non trivial examples''
\end{quote}
with \(\pi_2(X)\ne0\) satisfying all three conditions
\cite[p.~309, immediately before Corollary~4.6]{Hitchman}:
\[
 \bigl(\pi_2(X)\longrightarrow\pi_2(Y)\bigr)=0,\qquad
 (Y,X)\text{ has the identity property},\qquad
 Y\text{ is Cockcroft}.
 \tag{0.8}
\]
Theorem~\ref{thm:icosahedral-chain} answers this question.  It gives
more than the requested
finite pair. Observe that, the construction given here is flexible and can be generalized to other presentations of superperfect groups. 

At the level of individual complexes, asphericity can be formulated
entirely in terms of group presentations.  Indeed, let \(K\) be a
connected finite two-dimensional CW-complex.  Choose a maximal tree
in its \(1\)-skeleton and collapse it.  Since the tree is a
contractible subcomplex, this quotient is homotopy equivalent to
\(K\).  It has one \(0\)-cell, and its oriented \(1\)-cells and the
attaching words of its \(2\)-cells give a finite presentation
\(\mathcal P\) of \(\pi_1K\).  Thus
\[
 K\simeq K(\mathcal P);
\]
see \cite[Section~2.3, p.~28]{Hillman}.  In particular, no additional
wedge of \(2\)-spheres is needed.  For an inclusion of complexes,
preserving the inclusion as a subpresentation is an extra
requirement; our constructions have this property by definition. 

This presentation viewpoint has a natural Lie-algebra analogue.
Ellis introduced the second homotopy module
\(\pi_2(\mathcal P)\) of a Lie-algebra presentation
\(\mathcal P\) in \cite{Ellis}.  We call such a presentation
aspherical when this module vanishes.  The analogous question asks
whether every subpresentation of an aspherical Lie-algebra
presentation is aspherical.

Theorem~\ref{thm:lie-subpresentation} gives an integral example: the
smaller presentation has an explicit nonzero identity among
relations, the larger one has none, both conclusions survive base
change to every field.
Thus the theorem gives a negative answer to the
Lie-algebra subpresentation question over every field.

The author thanks L. Bartholdi, V. Ionin and A. Semidetnov for discussions related to the subject of the paper. 

\paragraph{AI use statement.}
This text was written with the assistance of ChatGPT 5.6 Sol. Working with AI was an iterative process, making it difficult to isolate its specific contribution. The constructions presented in this paper are relatively simple and flexible and can be easily generalized.

\section{An explicit two-step chain}
We will repeatedly use the following elementary cellular
calculation.  For a finite presentation
\[
 \mathcal P=\pres{g_1,\ldots,g_n}{r_1,\ldots,r_m},
\]
the complex \(K(\mathcal P)\) has one \(0\)-cell, one oriented
\(1\)-cell for each \(g_j\), and one \(2\)-cell for each \(r_i\).
Consequently its cellular chain complex over \(\mathbb Z\) is
\[
 0\longrightarrow\mathbb Z^m
 \xrightarrow{\ \partial_2\ }\mathbb Z^n
 \xrightarrow{\ 0\ }\mathbb Z\longrightarrow0,
 \qquad
 (\partial_2)_{ji}=\operatorname{exp}_{g_j}(r_i),
\]
where \(\operatorname{exp}_{g_j}(r_i)\) is the total exponent of
\(g_j\) in the word \(r_i\).  Indeed, collapse all \(1\)-cells except
the \(g_j\)-cell.  The attaching map of the \(r_i\)-cell then becomes
a map \(S^1\to S^1\), and its degree is precisely this exponent sum.
This is the cellular boundary formula
\cite[Section~2.2, pp.~140--141]{Hatcher}; see in particular
\cite[Example~2.36]{Hatcher}.  Hence
\[
 H_2K(\mathcal P)=\ker\partial_2,\qquad
 H_1K(\mathcal P)=\operatorname{coker}\partial_2.
\]
In general these two groups can be read from the Smith normal form
of the integer matrix \(\partial_2\).  In both applications below,
the kernel is visible directly, so no reduction is needed.
Equivalently,
\[
 \operatorname{exp}_{g_j}(r_i)
 =
 \varepsilon\!\left(\frac{\partial r_i}{\partial g_j}\right),
\]
where the expression on the right is the augmentation of the Fox
derivative \cite{Fox}.  This computes the ordinary homology of the
presentation complex.  The cellular boundary in the universal cover
is instead the full Fox matrix over the group ring; the exponent-sum
matrix alone does not compute \(\pi_2\).

The balanced presentation
\[
 \mathcal P=\pres{x,y}{x^2yx^{-1}y,\;xy^4xy^{-1}}
 \tag{1.1}
\]
defines \(SL(2,5)\), the binary icosahedral group of order \(120\)
\cite{HavasRamsay}.  Its exponent-sum matrix is
\[
 \begin{pmatrix}1&2\\2&3\end{pmatrix},
 \qquad \det=-1.
 \tag{1.2}
\]

Put
\[
 B=\pres{a,b,c,d}{[a,c],[a,d],[b,c],[b,d]}
   \cong F(a,b)\times F(c,d).
 \tag{1.3}
\]
Starting directly with the free product of the presentation
\textup{(1.1)} and \(B\), form two HNN extensions, identifying
\(\langle x[a,b]\rangle\) with \(\langle [c,d]\rangle\) and
\(\langle y[a,b]\rangle\) with \(\langle [a,b]^2\rangle\).  Thus
\[
\begin{aligned}
\mathcal X=\langle
&x,y,a,b,c,d,\tau,\sigma\mid
 x^2yx^{-1}y,\;xy^4xy^{-1},\\
&[a,c],[a,d],[b,c],[b,d],\\
&\tau^{-1}x[a,b]\tau[c,d]^{-1},\\
&\sigma^{-1}y[a,b]\sigma[a,b]^{-2}
\rangle .
\end{aligned}
\tag{1.4}
\]
Thus \(\mathcal X\) has eight generators and eight relators; no
auxiliary generator is needed.

Now attach two-cells along \(\tau\) and \(\sigma\), and then attach
four more two-cells along \(a,b,c,d\).  In presentation notation,
\[
 \mathcal Y=\langle\mathcal X\mid\tau,\sigma\rangle,
 \qquad
 \mathcal Z=\langle\mathcal Y\mid a,b,c,d\rangle.
 \tag{1.5}
\]
No generators are added at these two stages, so this gives a 
chain of finite two-complexes
\[
 K(\mathcal X)\subset K(\mathcal Y)\subset K(\mathcal Z).
 \tag{1.6}
\]

\begin{theorem}\label{thm:icosahedral-chain}
The complex \(K(\mathcal X)\) is non-aspherical, and
\(K(\mathcal Y)\) is non-aspherical and Cockcroft.  Moreover,
\[
 \pi_2K(\mathcal X)\longrightarrow\pi_2K(\mathcal Y)
 \quad\text{and}\quad
 \pi_2K(\mathcal Y)\longrightarrow\pi_2K(\mathcal Z)
 \tag{1.7}
\]
are both zero.  The pair
\((K(\mathcal Y),K(\mathcal X))\) has the identity property.
\end{theorem}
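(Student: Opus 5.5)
The plan is to first identify \(H=\pi_1K(\mathcal Y)\) explicitly and then derive every assertion of Theorem~\ref{thm:icosahedral-chain} from that identification together with the cellular calculations recalled before (1.1). In \(H\) we have \(\tau=\sigma=1\), so the last two relators of (1.4) give \(x=[c,d][a,b]^{-1}\) and \(y=[a,b]\). Since \(a,b\) commute with \(c,d\), the elements \(u=[c,d]\) and \(v=[a,b]\) commute, so \(x=uv^{-1}\) and \(y=v\) generate an abelian subgroup. But \(x,y\) satisfy the relators of (1.1), so this subgroup is a quotient of \(SL(2,5)\). That group is perfect (the matrix (1.2) has determinant \(-1\)), so \(x=y=1\) in \(H\). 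Hence \([a,b]=[c,d]=1\), and \(H\cong\mathbb Z^2\times\mathbb Z^2=\mathbb Z^4\). Similarly \(\pi_1K(\mathcal Z)=1\).

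For \(\mathcal X\), I would note that \(G=\pi_1K(\mathcal X)\) is obtained from \(SL(2,5)*(F_2\times F_2)\) by two HNN extensions along infinite cyclic subgroups. The elements \(x[a,b]\), \([c,d]\), \(y[a,b]\) and \([a,b]^2\) all have infinite order: the first and third are cyclically reduced of length two in the free product, and the other two are nontrivial in the torsion-free factor. Hence \(SL(2,5)\) embeds in \(G\), so \(G\) has torsion. An aspherical \(K(\mathcal X)\) would be a finite-dimensional \(K(G,1)\), which is impossible for a group with torsion. So \(\pi_2K(\mathcal X)\ne0\).

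For \(\mathcal Y\), I would argue by Euler characteristic: \(\chi(K(\mathcal Y))=1-8+10=3\). If \(K(\mathcal Y)\) were aspherical, then \(\mathbb Z^4\) would have cohomological dimension at most \(2\), which is false. So \(K(\mathcal Y)\) is non-aspherical.

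For the Cockcroft property, I would read \(H_2K(\mathcal Y)=\ker\partial_2\) from the exponent-sum matrix. The four commutator relators give zero columns. The remaining six nonzero columns (the two \(SL(2,5)\) relators, the two HNN relators and the relators \(\tau,\sigma\)) span the rank-\(4\) lattice on the coordinates \(x,y,\tau,\sigma\), using (1.2). Hence \(H_2K(\mathcal Y)\cong\mathbb Z^6\). By Hopf's theorem, \(H_2(H)=H_2K(\mathcal Y)/\operatorname{im}(\text{Hurewicz})\), and \(H_2(\mathbb Z^4)\cong\mathbb Z^6\). A surjection \(\mathbb Z^6\to\mathbb Z^6\) is an isomorphism, so the Hurewicz image is zero and \(K(\mathcal Y)\) is Cockcroft. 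The map \(\pi_2K(\mathcal Y)\to\pi_2K(\mathcal Z)\) is then zero by the Introduction's argument. Since \(K(\mathcal Z)\) is simply connected, \(\pi_2K(\mathcal Z)=H_2K(\mathcal Z)\), and the composite factors through the zero Hurewicz map of \(K(\mathcal Y)\).

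For \(\pi_2K(\mathcal X)\to\pi_2K(\mathcal Y)\), I would use the graph-of-spaces structure: the edge spaces are circles injecting on \(\pi_1\), so \(\pi_2K(\mathcal X)\) is generated as a \(\mathbb Z[G]\)-module by the image of \(\pi_2K(\mathcal P)\). It therefore suffices to show that \(\pi_2K(\mathcal P)\to\pi_2K(\mathcal Y)\) is zero. Since \(x,y\mapsto1\) in \(H\), the inclusion \(K(\mathcal P)\to K(\mathcal Y)\) lifts to the universal cover \(\widetilde{K(\mathcal Y)}\). The induced map then factors as
\[
\pi_2K(\mathcal P)\to H_2K(\mathcal P)\to H_2\widetilde{K(\mathcal Y)}=\pi_2K(\mathcal Y),
\]
and \(H_2K(\mathcal P)=\ker\partial_2=0\) because \(\det=-1\).

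Finally, for the identity property I would use (0.7) and show \(j=0\) directly in \(C_2(\widetilde{K(\mathcal Y)})\), where the new cells are those of \(\tau\) and \(\sigma\). The \(\tau\)-edge appears in the boundary of only two cells: the \(\tau\)-cell and the relator \(\tau^{-1}x[a,b]\tau[c,d]^{-1}\). The Fox derivative of that relator with respect to \(\tau\) is \(-\tau^{-1}+\tau^{-1}x[a,b]\), which maps to \(-1+1=0\) in \(\mathbb Z[H]\). Hence the \(\tau\)-edge coordinate of the boundary of a \(2\)-cycle equals its \(\tau\)-cell coefficient, which must vanish. The same holds for \(\sigma\), since \(\sigma^{-1}y[a,b]\mapsto1\). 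So \(j=0\).

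The step I regard as the crux is the identification \(H\cong\mathbb Z^4\): every other step reduces to a short homological computation once it is known. The only delicate auxiliary point is justifying that \(\pi_2K(\mathcal X)\) is generated by the vertex piece \(K(\mathcal P)\). If the graph-of-spaces argument there needs more care, I would first try the following: show that every spherical class of \(K(\mathcal X)\) has zero image by lifting the whole of \(K(\mathcal X)\) minus the HNN cells and arguing with the Mayer--Vietoris sequence of the universal cover.
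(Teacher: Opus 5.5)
Your proposal is correct and follows essentially the same route as the paper's proof. The shared steps are: torsion via the HNN embedding; $\pi_1K(\mathcal Y)\cong\mathbb Z^4$; the exponent-sum and Hopf computation for the Cockcroft property; reducing the first map to $\pi_2K(\mathcal P)$ and lifting to the universal cover; the $\tau$- and $\sigma$-column Fox derivatives for the identity property; and Cockcroft plus simple connectivity for the second map.

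The only real variation is how you show that $\pi_2K(\mathcal X)$ is generated by the image of $\pi_2K(\mathcal P)$. You use the tree of spaces with contractible edge strips, where the paper uses a Fox-column argument. In your version you should say explicitly that the vertex space is $K(\mathcal P)\vee K(B)$. Since $K(B)$ is a product of two bouquets, it is aspherical, so the $B$-part contributes nothing to $\pi_2$.
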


\begin{proof}
We verify the assertions in six short steps.

\smallskip
\noindent\textit{1.  The base group embeds.}
Let \(S=SL(2,5)\), presented by \textup{(1.1)}.  The standard
complex of \(B\) is the product of two bouquets, so it is
aspherical, and \([a,b],[c,d],[a,b]^2\) have infinite order.  Moreover
\(x,y\ne1\) in \(S\) (otherwise the two relations force \(S=1\)).
Hence \(x[a,b]\) and \(y[a,b]\) are reduced words
of length two in \(S*B\), and therefore also have infinite order.

Thus \(G(\mathcal X)\) is the multiple HNN extension of \(S*B\)
with stable letters \(\tau,\sigma\) and associated cyclic subgroups
\[
 \langle x[a,b]\rangle\xrightarrow{\cong}\langle [c,d]\rangle,
 \qquad
 \langle y[a,b]\rangle\xrightarrow{\cong}\langle [a,b]^2\rangle.
\]
Britton's lemma shows that the base group \(S*B\) embeds in
\(G(\mathcal X)\) \cite[I.5.2, Theorem~11]{Serre}.

\smallskip
\noindent\textit{2.  The source is non-aspherical.}
Since the matrix \textup{(1.2)} is unimodular, the presentation
complex \(K(\mathcal P)\) of \textup{(1.1)} satisfies
\[
 H_1K(\mathcal P)=H_2K(\mathcal P)=0.
\]
By Step~1, \(S\) embeds in \(G(\mathcal X)\).  Thus the latter group
has torsion, and
therefore \(K(\mathcal X)\) is not aspherical: the fundamental group
of a finite-dimensional aspherical complex is torsion-free
\cite[Chapter~VIII, Section~2]{Brown}.

We also record the part of \(\pi_2K(\mathcal X)\) that will be used
below.  Put \(G=G(\mathcal X)\) and compute the Fox boundary over
\(\mathbb Z[G]\), using row vectors with coefficients on the left.  In
the \(\tau\)-column only the relator
\[
R_\tau=\tau^{-1}x[a,b]\tau[c,d]^{-1}
\]
contributes, and
\[
 \frac{\partial R_\tau}{\partial\tau}=\tau^{-1}(x[a,b]-1).
\]
Right multiplication by \(h-1\) in \(\mathbb Z[G]\) is injective
whenever \(h\) has infinite order: a nonzero element with finite
support cannot be invariant under translation by \(h\).  Since
\(x[a,b]\) has infinite order, the coefficient of \(R_\tau\) in every
Fox cycle is zero.  The \(\sigma\)-column gives in the same way
\[
 \frac{\partial(\sigma^{-1}y[a,b]\sigma[a,b]^{-2})}{\partial\sigma}
 =\sigma^{-1}(y[a,b]-1),
\]
so the coefficient of the second HNN relator is also zero.

The remaining equations split into the \(S\)-block and the
\(B\)-block.  The latter has zero kernel because \(K(B)\) is
aspherical.  This stays true over \(\mathbb Z[G]\): choosing left-coset
representatives makes \(\mathbb Z[G]\) a direct sum of copies of
\(\mathbb Z[B]\) as a right \(\mathbb Z[B]\)-module.  Applying the same
argument to \(S\le G\) shows that the kernel of the \(S\)-block is
generated over \(\mathbb Z[G]\) by the kernel for \(\mathcal P\).
Consequently
\[
 \pi_2K(\mathcal X)
 =\mathbb Z[G]\cdot
 \operatorname{im}\bigl(\pi_2K(\mathcal P)\longrightarrow
                         \pi_2K(\mathcal X)\bigr).
\]

\smallskip
\noindent\textit{3.  The first map in (1.7) is zero.}
In \(G(\mathcal Y)\), the new relators give \(\tau=\sigma=1\).
The two HNN relations in \textup{(1.4)} then give
\[
 x=[c,d][a,b]^{-1},\qquad y=[a,b].
 \tag{1.8}
\]
Since \([a,b]\) and \([c,d]\) commute, the two icosahedral relators reduce to
\[
 [a,b][c,d]=1,\qquad [a,b][c,d]^2=1.
 \tag{1.9}
\]
Hence \([a,b]=[c,d]=1\), and therefore \(x=y=1\) in \(G(\mathcal Y)\).

Thus the map from the icosahedral presentation complex (1.1) to
\(K(\mathcal Y)\) is trivial on fundamental groups and lifts to the
universal cover of \(K(\mathcal Y)\).  The Hurewicz image of any
element of its \(\pi_2\) is therefore the image of a class in the
ordinary \(H_2\) of the icosahedral presentation complex.  That group
is zero by Step~2.  Since the universal cover of
\(K(\mathcal Y)\) is simply connected, its Hurewicz map in dimension
two is an isomorphism.  Hence the icosahedral \(\pi_2\) maps to zero.
The generating statement in Step~2 now gives
\[
 \bigl(\pi_2K(\mathcal X)\longrightarrow
 \pi_2K(\mathcal Y)\bigr)=0.
 \tag{1.10}
\]

\smallskip
\noindent\textit{4.  The target is Cockcroft and non-aspherical.}
Equations (1.8)--(1.9) also show directly that
\[
 G(\mathcal Y)
 \cong
 \pres{a,b,c,d}
 {[a,c],[a,d],[b,c],[b,d],[a,b],[c,d]}
 \cong\mathbb Z^4.
 \tag{1.11}
\]
We compute \(H_2\) directly from the original presentation
\(\mathcal Y\), which has eight generators and ten relators.  Every
relator has exponent sum zero in \(a,b,c,d\).  The two HNN
relators have exponent sum one in \(x\) and \(y\), respectively, and
zero in \(\tau,\sigma\).  The two new relators \(\tau,\sigma\) have
exponent sum one in their respective generators.  Thus the four
columns belonging to the two HNN relators and to
\(\tau,\sigma\) form the identity matrix in the rows
\(x,y,\tau,\sigma\).  All columns are zero in the other four rows.
Consequently the image of
\(\partial_2:\mathbb Z^{10}\to\mathbb Z^8\) is exactly the direct
summand spanned by \(x,y,\tau,\sigma\), and its kernel is free of
rank \(10-4=6\).  Therefore
\[
 H_2K(\mathcal Y)=\ker\partial_2=\mathbb Z^6.
 \tag{1.12}
\]
The surjection in Hopf's exact sequence
\[
 H_2K(\mathcal Y)\longrightarrow
 H_2(\mathbb Z^4)\cong\bigwedge\nolimits^2\mathbb Z^4
 \cong\mathbb Z^6
\]
has source and target free abelian of rank six, so it is an
isomorphism.  In the exact sequence
\[
 \pi_2K(\mathcal Y)\longrightarrow H_2K(\mathcal Y)
 \longrightarrow H_2(\mathbb Z^4)\longrightarrow0,
\]
the first arrow is the ordinary Hurewicz map and its image is the
kernel of the second arrow.  This kernel is zero, proving that
\(K(\mathcal Y)\) is Cockcroft.

If \(K(\mathcal Y)\) were aspherical, it would be a
two-dimensional \(K(\mathbb Z^4,1)\).  This is impossible because
\[
 H_3(\mathbb Z^4)\cong\bigwedge\nolimits^3\mathbb Z^4\ne0.
 \]
Thus \(K(\mathcal Y)\) is non-aspherical.

\smallskip
\noindent\textit{5.  The pair has the identity property.}
The generator \(\tau\) occurs in only one old relator,
\[
 \tau^{-1}x[a,b]\tau[c,d]^{-1},
\]
and
\[
\frac{\partial\bigl(
 \tau^{-1}x[a,b]\tau[c,d]^{-1}\bigr)}{\partial\tau}
 =\tau^{-1}\bigl(x[a,b]-1\bigr).
\]
In \(G(\mathcal Y)\) we have
\(x=y=[a,b]=[c,d]=\tau=\sigma=1\), whereas the Fox derivative of
the new relator \(\tau\) is \(1\).  Hence the \(\tau\)-coordinate of the
cellular boundary of a spherical cycle forces the coefficient of
the new \(\tau\)-cell to be zero.  The identical argument with
\[
 \sigma^{-1}y[a,b]\sigma[a,b]^{-2}
\]
forces the coefficient of the new \(\sigma\)-cell to be zero.  Thus
the projection of
\(\pi_2K(\mathcal Y)\) onto the two new cell coordinates vanishes.
By \cite[Proposition~3.3]{Hitchman}, this is exactly the identity
property.

\smallskip
\noindent\textit{6.  The second map in (1.7) is zero.}
By (1.11), adjoining the four relators \(a,b,c,d\) makes
\(K(\mathcal Z)\) simply connected.  By naturality and the Cockcroft
property of \(K(\mathcal Y)\), every element coming from
\(\pi_2K(\mathcal Y)\) has zero Hurewicz image in
\(H_2K(\mathcal Z)\).  Since the Hurewicz map for the simply
connected complex \(K(\mathcal Z)\) is an isomorphism, every such
element is zero in \(\pi_2K(\mathcal Z)\).  This proves the second
map in (1.7) is zero.
\end{proof}

\begin{remark}
The chain stops at this point.  Indeed,
\(\chi(K(\mathcal Z))=1-8+14=7\), and \(K(\mathcal Z)\) is simply
connected.  Hence
\[
 \pi_2K(\mathcal Z)\cong H_2K(\mathcal Z)\cong\mathbb Z^6,
\]
so \(K(\mathcal Z)\) is not Cockcroft.
\end{remark}

\section{An integral Lie ring example}

Let \(F=\operatorname{Lie}_{\mathbb Z}(X)\) be the free Lie ring on a
finite set \(X\), let \(R\normal F\) be the ideal generated by 
relations \(\rho_1,\ldots,\rho_m\), and set \(G=F/R\).  The abelianized
relation ideal
\[
 R^{\mathrm{ab}}=R/[R,R]
\]
is a left module over the universal enveloping ring \(U(G)\).  The
relations define
\[
 \partial_{\mathcal P}:
 \bigoplus_{i=1}^m U(G)e_{\rho_i}\longrightarrow R^{\mathrm{ab}},
 \qquad
 e_{\rho_i}\longmapsto \rho_i+[R,R].
 \tag{2.1}
\]
We write
\[
 \Pi_2(\mathcal P)=\ker\partial_{\mathcal P}
 \tag{2.2}
\]
for the integral module of identities among relations.  Over a field,
the corresponding kernel is the second homotopy module
\(\pi_2(\mathcal P)\) of the presentation introduced in
\cite[Theorem~17]{Ellis}; the presentation is called aspherical when
this module vanishes.

In the free Lie ring on \(x,y,a,b,c\), define
\[
 r_1=x+[x,[x,y]],\qquad
 r_2=y-[y,[x,y]].
 \tag{2.3}
\]
Let
\[
 \mathcal L_1=\pres{x,y}{r_1,r_2}.
 \tag{2.4}
\]
Adjoin \(a,b,c\) and the relations
\[
\begin{aligned}
 s_1&=a+[[x,y],a]+[y,c]-[x,y],\\
 s_2&=b-[[x,y],b]-[x,c],\\
 s_3&=c+[x,a]-[y,b],
\end{aligned}
\tag{2.5}
\]
and set
\[
\mathcal L_2=
 \pres{x,y,a,b,c}{r_1,r_2,s_1,s_2,s_3}.
 \tag{2.6}
\]
We have the inclusion \(\mathcal L_1\subset\mathcal L_2\) of presentations of Lie rings. 

\begin{theorem}\label{thm:lie-subpresentation}
For the integral Lie ring presentations
\(\mathcal L_1\subset\mathcal L_2\):
\[
 \Pi_2(\mathcal L_1)\neq0,\qquad
 \Pi_2(\mathcal L_2)=0.
 \tag{2.7}
\]
After base change to every field \(k\), the smaller presentation
remains non-aspherical and the larger one is aspherical.
Consequently \(\mathcal L_1\subset\mathcal L_2\) is a counterexample
to the Lie algebra version of Whitehead's asphericity question.
In particular, this inclusion kills the explicit nonzero identity
below.
\end{theorem}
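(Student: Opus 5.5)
The plan is to identify the group-theoretic pieces explicitly and compare with Lie ring homology. First I would determine \(G_1=G(\mathcal L_1)\). Put \(z=[x,y]\). The relations say \([z,x]=x\) and \([z,y]=-y\). The Jacobi identity then gives
\[
[z,z]=[[z,x],y]+[x,[z,y]]=[x,y]-[x,y]=0 ,
\]
which is automatic, and I expect no further collapse. So I would show that \(G_1\) is free abelian on \(x,y,z\) with \([x,y]=z\), \([z,x]=x\), \([z,y]=-y\). Closure under brackets is immediate. That \(x,y,z\) are \(\mathbb Z\)-independent follows because this bracket satisfies Jacobi and the relations hold in it, so it is a quotient of \(F\) by an ideal containing \(R\) and \(G_1\) maps onto it. Over \(\mathbb Q\) this is \(\mathfrak{sl}_2\) with \(h=2z\). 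By PBW, \(U(G_1)\) is a free \(\mathbb Z\)-module, and so is \(U(G_1\otimes k)=U(G_1)\otimes k\).

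For non-asphericity I would use Ellis's exact sequence. It reads
\[
0\to\Pi_2(\mathcal L_1)\to U(G_1)^2\xrightarrow{\partial}U(G_1)^2\to U(G_1)\to\mathbb Z\to0,
\]
where the middle map is \(\partial_{\mathcal P}\) followed by the standard embedding \(R^{\mathrm{ab}}\hookrightarrow U(G_1)\otimes F^{\mathrm{ab}}\), which is the Lie analogue of Fox calculus. If \(\Pi_2(\mathcal L_1)=0\), this would be a free resolution of length \(2\), so \(H_3(G_1;\mathbb Z)=0\), and likewise over any field \(k\). But \(G_1\) is a free \(\mathbb Z\)-module of rank \(3\), so the Chevalley--Eilenberg complex computes its homology. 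Its top term is \(\Lambda^3G_1\cong\mathbb Z\), and the differential into it vanishes for a \(3\)-dimensional algebra. Hence \(H_3\) is the quotient of \(\Lambda^3G_1\) by the \(G_1\)-action, which acts by \(\operatorname{tr}\operatorname{ad}\). The trace is zero: \(\operatorname{ad}z\) is \(\operatorname{diag}(1,-1,0)\) on \(x,y,z\), and \(\operatorname{ad}x\), \(\operatorname{ad}y\) are strictly off-diagonal. So \(H_3(G_1\otimes k;k)=k\neq0\) for every field \(k\), which is the required contradiction integrally and after every base change.

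To exhibit the explicit identity, I would lift the fundamental class of \(H_3\) back through the comparison with the Chevalley--Eilenberg resolution. Concretely, I would compute \(\partial(e_{r_1})\) and \(\partial(e_{r_2})\) in \(U(G_1)e_x\oplus U(G_1)e_y\) and find a \(U(G_1)\)-combination \(u_1e_{r_1}+u_2e_{r_2}\) in the kernel. I expect it to come from expanding \([z,\cdot\,]\) applied to the relators using Jacobi, with coefficients of degree \(\le2\) in \(x,y,z\).

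For \(\mathcal L_2\) the key claim is \(G_2=0\). Granting it, \(U(G_2)=\mathbb Z\) and \(R^{\mathrm{ab}}=F^{\mathrm{ab}}=\mathbb Z^5\). The map \(\partial_{\mathcal P}\) becomes the matrix of linear parts of \(r_1,r_2,s_1,s_2,s_3\), namely \(x,y,a,b,c\), since \([x,y]\) in \(s_1\) is quadratic. This matrix is the identity, so \(\Pi_2(\mathcal L_2)=0\). The same matrix over any \(k\) gives asphericity after base change, because \(G_2\otimes k=0\) as well.

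The main obstacle is proving \(G_2=0\), equivalently that \(z=[x,y]=0\) in \(G_2\); then \(x=-[x,z]=0\) and \(y=0\), and \(s_1,s_2,s_3\) give \(a=b=c=0\). The relations are visibly modelled on a Chevalley--Eilenberg coboundary with twisted coefficients: \(s_1,s_2,s_3\) read \(z=(1+\operatorname{ad}z)a+\operatorname{ad}y\,c\), \((1-\operatorname{ad}z)b=\operatorname{ad}x\,c\), \(c=\operatorname{ad}y\,b-\operatorname{ad}x\,a\). The intended mechanism is presumably that they force the central-type class \(z\) to be a boundary. I would first substitute \(c\) from \(s_3\) into \(s_1\) and \(s_2\), then bracket \(s_1\) with \(x\) and \(s_2\) with \(y\) and add them. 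Using \([z,x]=x\) and \([z,y]=-y\), I would aim to exhibit \(z\) as \((1+\operatorname{ad}z)\) applied to an element which is itself forced to vanish. If that stalls, the fallback is to grade \(G_2\) by the eigenvalues of \(\operatorname{ad}z\) and show that the weight-\(0\) component collapses.
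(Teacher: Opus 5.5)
\paragraph{The gap: $G_2=0$ is never proved.} Reducing $\Pi_2(\mathcal L_2)=0$ to $G_2=0$, and then computing the identity matrix over $\mathbb Z$ and over every $k$, is exactly what the paper does. But $G_2=0$ is the actual content of the asphericity claim. You leave it as an unresolved ``main obstacle'' with a tentative strategy. Its stated target, writing $z$ as $(1+\operatorname{ad}z)$ applied to something that vanishes, is not what happens.

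The missing ingredient is the paper's exact free Lie ring identity (2.11). The manipulation you propose already produces it, but it kills $x$, not $z$. Substituting $c$ from $s_3$ amounts to subtracting $[z,s_3]$. Three applications of Jacobi then give, in the free Lie ring with $z=[x,y]$,
\[
[x,s_1]+[y,s_2]-[z,s_3]=-[x,z]+[r_1,a]+[r_2,b]=x-r_1+[r_1,a]+[r_2,b].
\]
The $c$-terms cancel because $[x,[y,c]]-[y,[x,c]]=[[x,y],c]=[z,c]$. Hence $x\in R_2$. Then $z\in R_2$, $y\in R_2$ by $r_2$, and $c,a,b\in R_2$ by $s_3,s_1,s_2$. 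Without this computation, or an equivalent one, the second half of the theorem is unproved.

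\paragraph{Your route for $\mathcal L_1$ is different and valid.} The paper just notes the Jacobi identity $[y,r_1]+[x,r_2]=0$. So $\omega=y\,e_{r_1}+x\,e_{r_2}\in\Pi_2(\mathcal L_1)$, and it is nonzero because $x,y\neq0$ in $G_1\subset U(G_1)$; the coefficients have degree one. Your check that $G_1$ is $\mathbb Z$-free on $x,y,z$ supplies exactly the fact the paper asserts there without proof.

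Your homological argument proves more: no presentation of $G_1\otimes k$, for any field $k$, is aspherical. This is the Lie analogue of the $H_3(\mathbb Z^4)\neq0$ argument in Section~1. However, it is non-constructive, so it does not deliver the explicit identity the statement refers to.

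Two small corrections:
\begin{enumerate}
\item Since $\Lambda^4G_1=0$, $H_3$ is $\ker(d_3)$, not a quotient. One checks $d_3(x\wedge y\wedge z)=-[x,y]\wedge z+[x,z]\wedge y-[y,z]\wedge x=0$; this is where the tracelessness of $\operatorname{ad}$ enters.
\item You need not invoke Ellis's sequence over $\mathbb Z$. Since $-\otimes\mathbb Q$ is exact, $\Pi_2(\mathcal L_1)\otimes\mathbb Q$ is the rational $\pi_2$, so the field case implies the integral one.
\end{enumerate}
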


\begin{proof}
 There is an exact free Lie identity in the free Lie ring:
\[
 [y,r_1]+[x,r_2]=0.
 \tag{2.9}
\]
Indeed, after cancelling \([y,x]+[x,y]\), equation (2.9) is the
Jacobi identity for \(x,y,[x,y]\).  Therefore
\[
 \omega= y\,e_{r_1}+x\,e_{r_2}
 \tag{2.10}
\]
lies in \(\Pi_2(\mathcal L_1)\).  Let \(G_1\) be the Lie ring presented by \(\mathcal L_1\). The underlying abelian group of
\(G_1\) is free, so the integral PBW theorem embeds \(G_1\) into
\(U(G_1)\).  Both coefficients in (2.10) are nonzero, and the direct
sum in (2.1) is free.  Thus \(\omega\neq0\).  The same argument after
base change, using PBW over a field, shows that this identity remains
nonzero over every field.

For the larger presentation, the key point is the following
exact identity in the free Lie ring:
\[
x=[x,s_1]+[y,s_2]-[[x,y],s_3]+[a,r_1]+[b,r_2]+r_1 .
\tag{2.11}
\]
To verify it, expand the right side.  The linear pairs
\([x,a]+[a,x]\), \([y,b]+[b,y]\), and
\(-[x,[x,y]]+[x,[x,y]]\) cancel.  The remaining terms are the
three Jacobi sums
\[
\begin{aligned}
&[x,[[x,y],a]]-[[x,y],[x,a]]+[a,[x,[x,y]]],\\
&-[y,[[x,y],b]]+[[x,y],[y,b]]-[b,[y,[x,y]]],\\
&[x,[y,c]]-[y,[x,c]]-[[x,y],c],
\end{aligned}
\]
and hence vanish.

Let \(R_2\) be the ideal generated by the five marked relations in
\(\mathcal L_2\).  Equation (2.11) gives \(x\in R_2\), hence
\([x,y]\in R_2\).  The relation \(r_2\) then gives \(y\in R_2\);
successively \(s_1,s_2,s_3\) give \(a,b,c\in R_2\).  Thus
\[
 R_2=F,\qquad G_2=F/R_2=0.
 \tag{2.12}
\]
Consequently \(U(G_2)=\mathbb Z\) and
\[
 R_2^{\mathrm{ab}}=F/[F,F]
 \cong\mathbb Z\{x,y,a,b,c\}.
\]
Modulo \([F,F]\), the five marked relations
\((r_1,r_2,s_1,s_2,s_3)\) have respective classes
\((x,y,a,b,c)\).  Therefore (2.1) is the identity matrix
\[
 \partial_{\mathcal L_2}:\mathbb Z^5
 \xrightarrow{\ \cong\ }\mathbb Z^5,
\]
and \(\Pi_2(\mathcal L_2)=0\).  The same calculation survives every
base change \(\mathbb Z\to k\).  Thus over every field the second
homotopy module of the larger presentation vanishes, so that
presentation is aspherical.
\end{proof}

\bigskip
\noindent
Saint Petersburg State University\\
7/9 Universitetskaya nab., St.~Petersburg, 199034 Russia
\end{document}